\definecolor{myred}{rgb}{0.75,0,0}
\definecolor{mygreen}{rgb}{0,0.5,0}
\definecolor{myblue}{rgb}{0,0,0.65}
\DeclareFontFamily{U}{mathx}{}
\DeclareFontShape{U}{mathx}{m}{n}{<-> mathx10}{}
\DeclareSymbolFont{mathx}{U}{mathx}{m}{n}
\DeclareMathAccent{\widehat}{0}{mathx}{"70}
\DeclareMathAccent{\widecheck}{0}{mathx}{"71}
\newcommand{\co}{\colon}
\newtheorem{thm}{Theorem}[section]
\newtheorem{lem}[thm]{Lemma}
\newtheorem{prop}[thm]{Proposition}
\theoremstyle{definition}
\newtheorem{defn}[thm]{Definition}
\newtheorem{ex}[thm]{Example}
\theoremstyle{remark}
\newtheorem{rem}[thm]{Remark}
\def\ZZ{\mathbb{Z}}
\DeclareMathOperator{\id}{id}
\DeclareMathOperator{\RD}{RD}
\newcommand{\ufd}{U}
\newcommand{\lng}[1]{w_0^{#1}}
\newcommand{\lngi}[1]{w_0^{#1,i}}
\title[]{Coxeter embeddings are injective}
\author[]{Ben Elias}
\address{University of Oregon.}
\email{belias@uoregon.edu}
\author[]{Edmund Heng}
\address{Institut des Hautes \'Etudes Scientifiques (IHES).}
\email{heng@ihes.fr}
\begin{document}

\begin{abstract} 
We show that certain embeddings of Coxeter groups within other Coxeter groups are injective using the notion of Coxeter partitions.
Moreover, we study Lusztig's partitions, which are generalizations of Lusztig's admissible maps and Crisp's foldings.
We show that they classify the simplest type of Coxeter partitions, whose embeddings of Coxeter groups send each generator to a product of commuting generators.
Consequently, these embeddings are also injective, and we prove that they preserve Coxeter numbers. These results were previously known, due to work of M\"{u}hlherr and Dyer.
\end{abstract}

\maketitle

\section*{Authors' notes}
This short paper contains no novel results. When we first wrote this paper we believed Theorems \ref{strongermainthm} and \ref{prop:thmfordihedral} were new, after having asked experts and failing to find it in the literature. Subsequently, thanks to some helpful leads, we eventually tracked down the original sources. We will leave this paper on the arXiv to make the results more accessible, but do not intend to publish it. Discussion of prior work can be found in \S\ref{sec:relatedworks}.

\section{Coxeter partitions and Coxeter embeddings}
Throughout, a Coxeter system will be denoted as $(W,S)$, where $W$ is a Coxeter group and $S$ is the corresponding set of Coxeter generators.
For $J \subseteq S$, the corresponding parabolic subgroup of $W$ will be denoted by $W_J$.
Given words $u$ and $w$, and $m \geq 2$, the notation $\langle u, w \rangle^m$ denotes the $m$-alternating expression:
\[
\langle u, w \rangle^m:= \underbrace{uwu\ldots}_{m \text{ times}}.
\]
A subword is always assumed to be contiguous.

A surjective function $\pi \co \hat{S} \to S$ will partition the set $\hat{S}$ into a disjoint union of preimages $\ufd(s) := \pi^{-1}(s)$.

\begin{defn}
Let $(\hat{W}, \hat{S})$ be a Coxeter system, where $\hat{S}$ is finite, and let $S$ be a finite set. 
A \emph{Coxeter partition} is a surjective function $\pi \co \hat{S} \to S$ such that 
\begin{enumerate}
	\item For each $s \in S$, the parabolic subgroup $\hat{W}_{\ufd(s)}$ associated to $\ufd(s) \subseteq \hat{S}$ is a finite Coxeter group. Let $\lng{s}$ denote the longest element of $\hat{W}_{\ufd(s)}$.
	\item For each pair of elements $s \neq t \in S$, we denote the order of the element $\lng{s}\lng{t} \in \hat{W}$ by $m_{st}$ (possibly infinite). For some (and hence every) reduced expression of $\lng{s}$ and $\lng{t}$, we require the following: 
		\begin{enumerate}
		\item If $m_{st} < \infty$, then the parabolic subgroup associated to $\ufd(s) \sqcup \ufd(t)$ is finite, and $\langle \lng{s},\lng{t} \rangle^{m_{st}}$ is a reduced expression of its longest element.
		\item If $m_{st} = \infty$, then for all $k \geq 2$, we require that $\langle \lng{s},\lng{t} \rangle^{k}$ is a reduced expression. 
		\end{enumerate}
\end{enumerate}
Given a Coxeter partition of $(\hat{W}, \hat{S})$, let $(W,S)$ be the Coxeter system with simple reflections $S$, where $st$ has order $m_{st}$ for each $s \ne t \in S$.
\end{defn}
We note here that condition 2(a) implies that $\langle \lng{s},\lng{t} \rangle^k$ is reduced for each $2 \leq k \leq m_{st}$, being a subword of the reduced expression $\langle \lng{s},\lng{t} \rangle^{m_{st}}$.

\begin{defn} The \emph{Coxeter embedding} (associated to the Coxeter partition $\pi$) is the homomorphism $\phi \co W \to \hat{W}$ defined on generators by $s \mapsto \phi(s) := \lng{s}$.
\end{defn}

It is straightforward to verify that the Coxeter embedding is well-defined.

Our first result is as follows:
\begin{thm} \label{strongermainthm} 
Coxeter embeddings are injective, and send reduced expressions to reduced expressions.
Moreover, for all $w \in W$ and $\hat{s} \in \hat{S}$, setting $s = \pi(\hat{s})$, we have $\phi(w) \hat{s} < \phi(w)$ if and only if $w s < w$. 
\end{thm}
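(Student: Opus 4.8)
The plan is to first establish the descent compatibility $\phi(w)\hat s<\phi(w)\iff ws<w$ (with $s=\pi(\hat s)$), and deduce the other two assertions from it. Write $D_R(\cdot)$ for right descent sets, $\ell$ and $\hat\ell$ for the length functions of $W$ and $\hat W$, and set $L_s:=\hat\ell(\lng s)$. The only ingredient beyond bookkeeping that I would need is that $\lng s$ is the \emph{longest} element of the finite group $\hat W_{\ufd(s)}$: together with the standard theory of minimal coset representatives this gives, for any $x\in\hat W$, that $\hat\ell(x\lng s)=\hat\ell(x)+L_s$ if and only if $D_R(x)\cap\ufd(s)=\emptyset$, i.e.\ $x$ is the shortest element of the coset $x\hat W_{\ufd(s)}$. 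Assuming the descent compatibility: if $ws>w$ then $D_R(\phi(w))\cap\ufd(s)=\emptyset$, so $\hat\ell(\phi(ws))=\hat\ell(\phi(w)\lng s)=\hat\ell(\phi(w))+L_s$; inducting on $\ell(w)$ then gives $\hat\ell(\phi(w))=\sum_i L_{s_i}$ for every reduced word $s_1\cdots s_n$ of $w$, which says exactly that $\phi$ carries reduced expressions to reduced expressions. Injectivity follows by inducting on $\ell(w)+\ell(w')$: if $\phi(w)=\phi(w')$ and $w\neq 1$, pick $r\in D_R(w)$; the descent compatibility, applied to $w$ and then to $w'$, forces $r\in D_R(w')$, and since $\phi$ is a homomorphism and $\lng r$ is an involution, $\phi(wr)=\phi(w)\lng r=\phi(w')\lng r=\phi(w'r)$, whence $wr=w'r$ and $w=w'$.

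I would prove the descent compatibility --- equivalently, $D_R(\phi(w))=\pi^{-1}(D_R(w))$ --- by induction on the rank $|S|$, and within each rank by induction on $\ell(w)$. The inclusion $\pi^{-1}(D_R(w))\subseteq D_R(\phi(w))$ is the easy half: if $ws<w$, write $w=vs$ with $vs>v$; the inner hypothesis gives $D_R(\phi(v))\cap\ufd(s)=\emptyset$, so $\phi(v)$ is shortest in the coset $\phi(v)\hat W_{\ufd(s)}$, and then for each $\hat s\in\ufd(s)$ the element $\phi(w)\hat s=\phi(v)(\lng s\hat s)$ has length $\hat\ell(\phi(v))+(L_s-1)=\hat\ell(\phi(w))-1$, so $\hat s\in D_R(\phi(w))$.

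The reverse inclusion $D_R(\phi(w))\subseteq\pi^{-1}(D_R(w))$ is the crux. For the base case $|S|\le 2$ the group $W$ is dihedral, every $w\in W$ is an alternating word $\langle s,t\rangle^k$, its image under $\phi$ is $\langle\lng s,\lng t\rangle^k$, and conditions 2(a) and 2(b) assert precisely that this is a reduced expression in $\hat W$ (hence so are all its prefixes); one then reads off $D_R(\phi(\langle s,t\rangle^k))$ directly, using that a reduced word ending in an $\lng s$-block is extended by $\lng t$ to the reduced word $\langle\lng s,\lng t\rangle^{k+1}$ whenever $k+1\le m_{st}$, and that, when $m_{st}<\infty$, $\phi$ of the longest element of $W$ is the longest element of $\hat W_{\ufd(s)\sqcup\ufd(t)}$. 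For the inductive step, suppose $|S|\ge 3$ and $ws>w$; since $D_R(w)$ is nonempty and avoids $s$, pick $r\in D_R(w)$ with $r\neq s$, set $J=\{r,s\}$, and write $w=vv'$ with $v$ the shortest element of $wW_J$ and $v'\in W_J$, the lengths adding. Since $r\in D_R(w)$ while $r\notin D_R(v)$, we have $v'\neq 1$ and hence $\ell(v)<\ell(w)$, so the inner hypothesis applies to $v$ and gives $D_R(\phi(v))\cap\pi^{-1}(J)=\emptyset$; thus $\phi(v)$ is shortest in the coset $\phi(v)\hat W_{\pi^{-1}(J)}$, and as $\phi(v')\in\hat W_{\pi^{-1}(J)}$ we get $D_R(\phi(w))\cap\pi^{-1}(J)=D_R(\phi(v'))$. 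Finally $v's>v'$ in $W_J$ (otherwise $ws<w$), so the descent compatibility for the restricted Coxeter partition $\pi|_{\pi^{-1}(J)}\co\pi^{-1}(J)\to J$ --- which has rank $|J|=2<|S|$ and is covered by the outer induction --- yields $D_R(\phi(v'))\cap\ufd(s)=\emptyset$, whence $D_R(\phi(w))\cap\ufd(s)=\emptyset$.

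The main obstacle is this reverse inclusion, and the crucial device is the passage to the rank-$2$ parabolic $W_{\{r,s\}}$ through a genuine right descent $r$ of $w$: it simultaneously produces a strictly shorter element for the inner induction and reduces the question to the dihedral case, which is exactly what hypotheses 2(a) and 2(b) are tailored to govern. The remaining points are routine: checking that $\pi|_{\pi^{-1}(J)}$ is again a Coxeter partition (immediate, since $\hat W_{\pi^{-1}(J)}$ is a parabolic subgroup of $\hat W$ and every clause of the definition is intrinsic to it), and the standard length-additivity bookkeeping for minimal coset representatives used throughout.
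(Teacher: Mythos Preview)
Your proof is correct and rests on the same structural device as the paper's: factor $w$ through a rank-two parabolic $W_J$ via the minimal coset decomposition $w=v\cdot v'$, apply condition~(2) to handle the dihedral piece $v'$, and induct on length for the coset representative $v$. The difference is in which statement is taken as primary. The paper proves directly that $\phi$ preserves reduced expressions, by induction on $\ell(w)$: it takes $J$ to be the last two letters of a reduced word for $w$, assumes $\phi(x)\phi(y)$ fails to be reduced, and uses a small lemma on right descents to reach a contradiction; the descent compatibility is then read off afterwards. You instead make the descent identity $D_R(\phi(w))=\pi^{-1}(D_R(w))$ the main target, prove it by a double induction on $|S|$ and $\ell(w)$ (choosing $J=\{r,s\}$ with $r$ a genuine descent and $s$ a non-descent), and obtain reduced-expression preservation and injectivity as corollaries. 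Your packaging is a bit more direct (no contradiction, clean isolation of the rank-two base case) at the cost of a nominally two-layer induction whose outer layer is really only ever invoked at $|S|=2$; the paper's packaging hides the rank-two reduction inside the contradiction step but gets by with a single induction on length. Substantively they are the same argument.
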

This theorem is originally due to M\"{u}hlherr \cite{Muhlherr_CoxinCox}, see \S\ref{sec:relatedworks}; we provide a short proof of the theorem below. 
Examples of Coxeter partitions can be found in \S \ref{sec:examples}, and a special family of Coxeter partitions satisfying $\hat{W}_{U(s)} \cong \prod_i \ZZ/2\ZZ$ (i.e.\ a product of type $A_1$ finite Coxeter groups) for all $s \in S$ will be studied in \S \ref{sec:lusztigpart}.

We will use some standard notions and results for Coxeter groups, which can all be found (or deduced) from \cite[\S2, \S3 and \S4]{Comb_Coxeter_book}.
From here on, the partial order $\leq$ (and $<$) on any Coxeter system refers to the weak right Bruhat order.
We use the following notation. For elements $x, y, z$ in a Coxeter group, we write $x = y.z$ whenever $x = yz$ and $\ell(x) = \ell(y) + \ell(z)$; this is called a \emph{reduced
composition}. We write $\RD(w)$ for the \emph{right descent set} of $w$, i.e.\ the set of simple reflections $s \in S$ such that $ws<w$. Recall that $\RD(w)$ always generates a {\bf finite} Coxeter group. 
Moreover, if $x = y.z$ then $\RD(z) \subset \RD(x)$.

If $(W,S)$ is a Coxeter system and $J \subseteq S$, write $W^J$ for the set of minimal length representatives for cosets in $W/W_J$. Note that $W^J$ 
is the set of $w \in W$ such that $\RD(w) \cap J = \emptyset$.
If $y \in W^J$ and $z \in W_J$ then $y.z$ is reduced. Any $x \in W$ has a unique decomposition $x = y.z$ with $y \in W^J$ and $z \in W_J$.

\begin{lem}\label{lem:rightdescent}
Let $(W,S)$ be any Coxeter system and $J \subseteq S$.
Suppose we have $a \in J$, $B\in W_J$ and $C \in W$ such that $C.B$ is reduced but $CBa$ is not.
Then either $a \in \RD(B)$ or $\RD(C) \cap J \neq \emptyset$.
\end{lem}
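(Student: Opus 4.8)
The plan is to exploit the standard decomposition of $C$ relative to the parabolic subgroup $W_J$ and then reduce to a statement purely inside the finite (or general) Coxeter group $W_J$. Write $C = C'.D$ with $C' \in W^J$ and $D \in W_J$; this decomposition exists and is reduced by the facts recalled just before the lemma. Since $C.B$ is reduced with $B \in W_J$, and $C = C'.D$, I would first check that $C.B = C'.(DB)$ is the $W^J$/$W_J$ decomposition of $CB$: indeed $\ell(C'DB) = \ell(C') + \ell(DB)$ because $\ell(C'DB) = \ell(CB) = \ell(C) + \ell(B) = \ell(C') + \ell(D) + \ell(B) \ge \ell(C') + \ell(DB)$, while the reverse inequality is automatic; and $C' \in W^J$ while $DB \in W_J$.

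Next I would analyze the hypothesis that $CBa$ is \emph{not} reduced, i.e. $a \in \RD(CB)$. Using the decomposition $CB = C'.(DB)$ with $C' \in W^J$ and $a \in J$: I claim $a \in \RD(CB)$ forces $a \in \RD(DB)$. This is the key structural point — for a reduced composition $x = y.z$ with $z \in W_J$ and $y \in W^J$, the right descents of $x$ lying in $J$ are exactly the right descents of $z$. One direction, $\RD(z) \subseteq \RD(x)$, is already quoted in the excerpt; for the other, if $s \in J \cap \RD(x)$ then $xs < x$, and since $y \in W^J$ means $ys > y$ (as $s \in J$), a standard exchange/deletion argument (or the fact that $W^J$ is closed under the property that left-multiplying a shorter element of $W^J$ by an element of $W_J$ stays reduced) shows the length drop must occur within the $W_J$-part, giving $zs < z$. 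So $a \in \RD(DB)$.

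Now everything takes place inside $W_J$: we have $D \in W_J$, $B \in W_J$, the product $DB$ is reduced (i.e. $\ell(DB) = \ell(D) + \ell(B)$, shown above), and $a \in J \cap \RD(DB)$. I want to conclude that either $a \in \RD(B)$ or $\RD(D) \cap J \neq \emptyset$ — but $D \in W_J$ so $\RD(D) \subseteq J$ automatically, and $\RD(D) \cap J \neq \emptyset$ just means $D \neq e$. So the remaining claim is: if $DB$ is reduced and $a$ is a right descent of $DB$ but not of $B$, then $D \neq e$. This is immediate: if $D = e$ then $DB = B$ and $\RD(DB) = \RD(B)$, contradicting $a \notin \RD(B)$. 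Finally I translate back: $\RD(D) \cap J \neq \emptyset$ together with $\RD(D) \subseteq \RD(C)$ (from $C = C'.D$, using the quoted fact $\RD(z) \subseteq \RD(x)$ for $x = y.z$) gives $\RD(C) \cap J \neq \emptyset$, completing the proof.

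The main obstacle I anticipate is the middle step — rigorously proving that a right descent $a \in J$ of a reduced composition $y.z$ with $y \in W^J$, $z \in W_J$ must already be a right descent of $z$. The cleanest route is probably: $a \in \RD(yz)$ means $\ell(yza) < \ell(yz)$; since $a \in J$ and $za \in W_J$, write $za = z'$ and note $yz'$ has $y \in W^J$, $z' \in W_J$ so $\ell(yz') = \ell(y) + \ell(z')$; hence $\ell(y) + \ell(z') < \ell(y) + \ell(z)$, giving $\ell(z') < \ell(z)$, i.e. $\ell(za) < \ell(z)$, which is exactly $a \in \RD(z)$. This is short, but I should double-check the uniqueness-of-decomposition and length-additivity claims are being applied to the right elements; no serious difficulty is expected beyond that bookkeeping.
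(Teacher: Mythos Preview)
Your proof is correct. Both your argument and the paper's rest on the same structural fact: if $y \in W^J$ and $z \in W_J$ then $\ell(yz) = \ell(y) + \ell(z)$. The difference is organizational. The paper argues by contrapositive in one line: assuming $\RD(C) \cap J = \emptyset$ is \emph{exactly} the statement $C \in W^J$, so $C.(Ba)$ is automatically reduced, and since $CBa$ is not reduced as a word, $Ba$ cannot be reduced, i.e.\ $a \in \RD(B)$. You instead decompose $C = C'.D$ with $C' \in W^J$, $D \in W_J$, push the problem into $W_J$, and then split on whether $D = e$. Your route works and is perhaps more ``constructive,'' but the decomposition of $C$ is unnecessary overhead: once you notice that the second alternative in the conclusion is literally the condition $C \notin W^J$, you can just assume $C \in W^J$ and the argument collapses to three sentences. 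Your anticipated ``main obstacle'' (that $a \in J \cap \RD(y.z)$ with $y \in W^J$, $z \in W_J$ forces $a \in \RD(z)$) is handled cleanly by your length computation, and is in fact the same computation the paper uses implicitly.
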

\begin{proof}
Suppose $\RD(C) \cap J = \emptyset$. This implies $C \in W^J$.
Since $Ba \in W_J$, $C.(Ba)$ is reduced. If $Ba$ were reduced, then $CBa$ would be reduced. Thus $Ba$ is not reduced, implying $a \in \RD(B)$.
\end{proof}

\begin{proof}[Proof of Theorem \ref{strongermainthm}]
We show that a Coxeter embedding $\phi$ sends reduced expressions of $w \in W$ to reduced expressions in $\hat{W}$, by induction on the length of $w$. 
For $\ell(w) \leq 1$, the statement follows from the definition of Coxeter embeddings.
From now on let $w \in W$ with $\ell(w) \geq 2$, and assume via induction that the statement holds for all reduced expressions of elements in $W$ with length $< \ell(w)$. 

Pick a reduced expression for $w$, and let $s$ and $t$ be the last two letters of this reduced expression. Let $J = \{s,t\} \subset S$, and consider the unique decomposition $w = x.y$ with $x \in W^J$ and $y \in W_J$. Note that $\ell(y) \geq 2$. As such, $\ell(x) \le \ell(w)-2$ and so $\phi(x)$ is reduced by the inductive hypothesis. Moreover, as an element of the dihedral group $W_J$, $y$ is an alternating product of $s$ and $t$, so Condition (2) of Coxeter partitions guarantees that $\phi(y)$ is reduced. The statement is proven if $\phi(x) . \phi(y)$ is reduced.

Assume to the contrary that $\phi(x)\phi(y)$ is not reduced.
We decompose the reduced expression $\phi(y) \in \hat{W}_{\ufd(s) \sqcup \ufd(t)}$ into $B.a.A$ with $a \in \ufd(s) \sqcup \ufd(t) \subseteq \hat{S}$ so that $\phi(x).B$ is reduced but $\phi(x)Ba$ is not.
Note that $B.a$ is reduced by construction, so $a \not\in \RD(B)$.
By Lemma \ref{lem:rightdescent}, there exists some $b \in \RD(\phi(x)) \cap (\ufd(s) \sqcup \ufd(t)) \neq \emptyset$.
Suppose $b \in \ufd(s)$. Then $b \leq \lng{s} = \phi(s)$ and so $\phi(xs) = \phi(x)\phi(s)$ is not reduced.
But $xs=x.s$ is reduced (since $x \in W^{s,t}$) and $\ell(xs) = \ell(x) + 1 < \ell(w)$. By the induction hypothesis $\phi(x)\phi(s)$ must be reduced, which is a contradiction. The case $b \in \ufd(t)$ is treated in exactly the same fashion.

Since reduced expressions are sent to reduced expressions, $\phi$ has trivial kernel.

We now prove the final statement of the theorem. An equivalent statement is that either $s \in \RD(w)$ and $\ufd(s) \subset \RD(\phi(w))$, or $s \notin \RD(w)$ and $\ufd(s) \cap \RD(\phi(w)) = \emptyset$. Suppose $ws > w$. Then $\phi(w) \phi(s)$ is reduced so $\RD(\phi(w)) \cap \ufd(s) = \emptyset$. Conversely, suppose that $ws < w$. Then $w$ has a reduced expression ending in $s$, so $\phi(w)$ has a reduced expression ending in $\phi(s)$. Thus $\ufd(s) \subset \RD(\phi(w))$. 
\end{proof}

\section{Lusztig's partitions} \label{sec:lusztigpart}

We use the convention that an irreducible {\bf infinite} type Coxeter system $(W,S)$ has Coxeter number $\infty$.

\begin{defn} \label{defn:lusztig}
Let $(\hat{W}, \hat{S})$ and $(W,S)$ be two Coxeter systems, with $\hat{S}$ and $S$ finite sets.
A \emph{Lusztig's partition} is a surjective function $\pi \co \hat{S} \to S$ such that 
\begin{enumerate}
	\item For each $s \in S$, the elements of $\ufd(s)$ all commute with each other. \label{item:commute}
	\item For each pair $s \neq t \in S$, the parabolic subgroup $\hat{W}_{\ufd(s) \sqcup \ufd(t)}$ generated by $\ufd(s) \sqcup \ufd(t)$ is a product of Coxeter systems which all have the same Coxeter number $m_{st}$ (whether finite or infinite). 
\end{enumerate}
\end{defn}
Said another way, a Lusztig's partition is a coloring of the vertices $\hat{S}$ in the Coxeter graph of $\hat{W}$, such that no color is adjacent to itself, and such that any two colors form a disjoint union of Coxeter graphs with the same Coxeter number. 

It is well-known when $\hat{W}$ is finite that the morphism $\phi: W \to \hat{W}$ associated to a Lusztig's partition defined by
\[
\phi(s) = \prod_{\hat{s} \in \ufd(s)} \hat{s}
\]
sends reduced expressions to reduced expressions (e.g.\ via Lemma \ref{lem:Bourbaki} below).
We prove that this remains true even when $\hat{W}$ is infinite, by showing that Lusztig's partitions are a specific type of Coxeter partitions.
In fact, we will also show that Lusztig's partitions are precisely Coxeter partitions which satisfy (the stronger) condition \eqref{item:commute} of Definition \ref{defn:lusztig}.
In other words, Lusztig's partitions classify the ``simplest'' type of Coxeter partitions, where the elements of $U(s)$ are all pair-wise disjoint in the Coxeter graph of $(\hat{W}, \hat{S})$.
These will all be the content of Theorem \ref{prop:thmfordihedral}.

We begin by recalling two lemmas about reduced expressions for powers of Coxeter elements.  The first lemma is classical, and the second is a result of Speyer.

\begin{lem}[\protect{\cite[Chapter V \S6 Exercise 2]{Bourbaki_Lie_4-6}}] \label{lem:Bourbaki}
Let $(W,S)$ be an irreducible finite Coxeter system with Coxeter number $h$.
Let $S= S' \sqcup S''$ be a partition of $S$ so that in each subset all elements commute.
Denote $x:= \prod_{s'\in S'} s'$ and $y:= \prod_{s'' \in S''} s''$.
Then the longest element $w_0$ is given by 
\[
w_0 = \langle y, x \rangle^h = \langle x, y \rangle^h
\]
and both expressions are reduced.
In particular, the largest power of the Coxeter element $c:= x.y$ that is reduced is $c^{\lfloor h/2 \rfloor}$.
\end{lem}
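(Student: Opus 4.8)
The statement is classical --- it goes back to the cited exercise of \cite{Bourbaki_Lie_4-6} --- so the plan is to recall the standard proof through the geometric representation, indicating the steps rather than grinding out the routine parts. First I would pass to the usual reflection realization of $W$ on a Euclidean space, with simple roots $\{\al_s : s\in S\}$ and positive system $\Phi^+$, recalling that $\ell(w_0)=|\Phi^+|=nh/2$, where $n=|S|$. Since the simple reflections within $S'$ pairwise commute, the simple roots $\{\al_s : s\in S'\}$ are pairwise orthogonal, so $x=\prod_{s\in S'}s$ is an involution: it is the longest element of $W_{S'}\cong\prod_{s\in S'}\ZZ/2\ZZ$, it sends each $\al_s$ $(s\in S')$ to $-\al_s$ and permutes the remaining positive roots, and $\ell(x)=|S'|$; likewise for $y$. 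As $\RD(x)=S'$ is disjoint from $S''$, we have $x\in W^{S''}$, so $x.y$ is reduced and $c:=xy$ is a bipartite Coxeter element with $\ell(c)=n$.

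The crux is the following count: for $1\le k\le\lfloor h/2\rfloor$ the word $\langle x,y\rangle^{2k}=c^k$ is reduced (i.e.\ $\ell(c^k)=kn$), and, whenever $2k+1\le h$, the word $\langle x,y\rangle^{2k+1}=c^k.x$ is reduced of length $kn+|S'|$. Granting this, the remainder is bookkeeping. If $h$ is even, $\langle x,y\rangle^h=c^{h/2}$ is reduced of length $\tfrac h2 n=|\Phi^+|=\ell(w_0)$, hence equals $w_0$. If $h$ is odd, $W$ is of type $I_2(m)$ ($m$ odd) or $A_{2m}$, and in either case the bipartition --- unique, the Coxeter graph being a connected tree --- has $|S'|=|S''|=n/2$; then $\langle x,y\rangle^h=c^{(h-1)/2}.x$ is reduced of length $\tfrac{h-1}2 n+\tfrac n2=\tfrac h2 n=\ell(w_0)$, so again $\langle x,y\rangle^h=w_0$. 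Interchanging $S'$ and $S''$ yields $\langle y,x\rangle^h=w_0$ as well (for even $h$ one may instead note that $\langle y,x\rangle^h$ is the reverse of the reduced word $\langle x,y\rangle^h$ and $w_0=w_0^{-1}$). Finally, $c^{\lfloor h/2\rfloor+1}$ cannot be reduced, its naive length $(\lfloor h/2\rfloor+1)n$ exceeding $\tfrac h2 n=|\Phi^+|$; this gives the claim about the largest reduced power of $c$, all smaller powers being subwords of a reduced expression for $c^{\lfloor h/2\rfloor}$.

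To establish the count I would invoke the spectral description of the bipartite Coxeter element: on the reflection representation $c$ has eigenvalues $e^{2\pi\sqrt{-1}\,m_j/h}$ with $m_j$ the exponents of $W$, so $1$ is not an eigenvalue and $c$ acts as a rotation by $2\pi/h$ on a distinguished two-plane $P$ (the Coxeter plane); moreover $P$ is preserved by $x$ and $y$, which act on $P$ as the reflections in two lines meeting at angle $\pi/h$. Projecting $\Phi^+$ into $P$ and tracking the roots as one alternately applies $x$ and $y$, one checks that each successive application carries exactly $n$ further positive roots across to the negative side, which yields $\ell(c^k)=kn$ for $k\le\lfloor h/2\rfloor$ together with the analogous statement for $c^k.x$. \textbf{This equidistribution is the main obstacle:} the fact that ``exactly $n$ new roots move at each step'' is precisely the nontrivial geometric input --- it rests on the regularity of $c$ and on irreducibility of $W$, and fails for a general Coxeter element --- while the rank-two case, where $P$ is all of the space, should be handled separately (there it is immediate). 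If one prefers to avoid the Coxeter plane altogether, the lemma may instead be verified type by type from the classification, using the known reduced expressions for $w_0$.
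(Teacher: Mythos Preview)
The paper does not supply a proof of this lemma: it is stated with a citation to Bourbaki and then used as a black box. So there is nothing to compare your argument against within the paper itself.

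Your sketch is the standard route to this classical fact and is essentially correct. The reduction to the length count $\ell(\langle x,y\rangle^k)=k|S'|+\lfloor k/2\rfloor|S''|$ (equivalently $\ell(c^k)=kn$ for $k\le\lfloor h/2\rfloor$) is the right move, and your bookkeeping for even and odd $h$ is accurate, including the observation that odd $h$ forces type $A_{2m}$ or $I_2(m)$ with $m$ odd, where the bipartition is balanced. The one place you correctly flag as nontrivial --- that each alternating application of $x$ or $y$ sends exactly $|S'|$ (resp.\ $|S''|$) new positive roots to negatives --- is indeed where the substance lies; the Coxeter-plane argument you outline is one clean way to see it, though Bourbaki's own exercise proceeds slightly differently (via the eigenvalues of $c$ and Steinberg's analysis of root orbits under $c$). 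If you wanted to make the sketch self-contained without the Coxeter plane, an alternative is to use that the $c$-orbits on $\Phi$ each have size $h$ and that a reduced expression for $c$ determines a bijection between $S$ and a set of orbit representatives, from which the inversion count follows directly.
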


\begin{lem}[\protect{\cite[Theorem 1]{SpeyerReduced}}] \label{lem:Speyer}
Let $(W,S)$ be an irreducible infinite Coxeter system. For any choice of Coxeter element $c \in W$, $c^k$ is a reduced expression for all $k$.
\end{lem}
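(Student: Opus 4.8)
The plan is to work in the geometric representation of $(W,S)$, reduce the claim to a positivity statement for the forward $c$-orbit of finitely many roots, and then feed in the spectral behaviour of the Coxeter transformation, which splits into the affine and the indefinite case. Let $V=\bigoplus_{s\in S}\RR\alpha_s$ carry the standard $W$-invariant symmetric bilinear form $B$, with root system $\Phi=\Phi^+\sqcup\Phi^-$, and use the standard criterion that a word $t_1\cdots t_\ell$ is reduced if and only if the roots $\beta_p:=t_1\cdots t_{p-1}(\alpha_{t_p})$ are pairwise distinct and all positive. Fixing an ordering $c=s_1\cdots s_n$ with $n=|S|$ and setting $\gamma_j:=s_1\cdots s_{j-1}(\alpha_{s_j})$ --- so that $\gamma_1,\dots,\gamma_n$ are exactly the positive roots $\beta$ with $c^{-1}(\beta)\in\Phi^-$ --- the roots produced by the word $c^k=(s_1\cdots s_n)^k$ are precisely the $c^i(\gamma_j)$ for $0\le i<k$, $1\le j\le n$. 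Once all of these are positive they are automatically distinct: a coincidence $c^i(\gamma_j)=c^{i'}(\gamma_{j'})$ with $i<i'$ would give $c^{-1}(\gamma_j)=c^{(i'-i)-1}(\gamma_{j'})\in\Phi^+$, contradicting $c^{-1}(\gamma_j)\in\Phi^-$. So the lemma reduces to the single assertion that $c^i(\gamma_j)\in\Phi^+$ for all $i\ge 0$ and all $j$.

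To get at this, the mechanism I would use is: if there is a linear functional $g\in V^*$ with $g(\alpha_s)>0$ for every $s$ and $g\circ c=\mu g$ for some real $\mu>0$, then we are done, since $g$ is then strictly positive on $\Phi^+$ and strictly negative on $\Phi^-$, and $g(c^i(\gamma_j))=\mu^i g(\gamma_j)>0$ forces the root $c^i(\gamma_j)$ to be positive. When $(W,S)$ is irreducible of \emph{indefinite} type I would produce such a $g$ from the dominant eigenvalue: the Coxeter transformation $c$ then has spectral radius $\lambda>1$ realized by a simple real eigenvalue, and a Perron--Frobenius argument should make the corresponding left $\lambda$-eigenfunctional, suitably normalised, strictly positive on the simple roots. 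This is classical for the bipartite Coxeter element --- where the spectrum of $c$ is controlled, via Perron--Frobenius, by a nonnegative indecomposable matrix built from the Coxeter graph --- and I expect the general case to follow from it when the Coxeter graph is a tree, and otherwise from the connectedness of the graph directly.

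When $(W,S)$ is of \emph{affine} type no such $g$ exists: $1$ is the only positive real eigenvalue of $c$, and any $c$-invariant functional must annihilate the imaginary root $\delta=\sum_s a_s\alpha_s$ (all $a_s>0$), hence cannot be positive on every $\alpha_s$. Here I would argue with the ``twisted translation'' structure instead: $c$ fixes $\delta$, acts on $V/\RR\delta$ as a finite-order Coxeter element of the finite part, and $c^h$ (with $h$ the Coxeter number) acts unipotently by $v\mapsto v+\lambda_0(v)\,\delta$ for a nonzero $c$-invariant functional $\lambda_0$ with $\lambda_0(\delta)=0$. Then $c^{qh+r}(\gamma_j)=c^r(\gamma_j)+q\,\lambda_0(\gamma_j)\,\delta$, and since adding a large positive multiple of $\delta$ makes all simple coordinates positive, positivity for large $k$ reduces to $\lambda_0(\gamma_j)\ge 0$ for each $j$; the finitely many small values of $k$ (and the genuinely periodic orbits arising when $\lambda_0(\gamma_j)=0$) are then checked directly.

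The hard part, in both cases, is precisely this positivity of the spectral data of the Coxeter transformation --- strict positivity of its dominant left eigenfunctional on the simple roots (indefinite type), and nonnegativity of the unipotent displacement $\lambda_0$ on the inversion roots of $c$ (affine type). This is where I would either appeal to the literature on spectra of Coxeter transformations (A'Campo and subsequent refinements) or, for tree diagrams, reduce to the bipartite Coxeter element and use honest Perron--Frobenius together with the finite-type statement of Lemma~\ref{lem:Bourbaki}. The rest --- the reduction to the orbit statement, the eigenfunctional mechanism, and the bookkeeping in the affine case --- is soft.
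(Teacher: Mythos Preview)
The paper does not prove this lemma at all: it is quoted as \cite[Theorem 1]{SpeyerReduced} and used as a black box. So there is no in-paper proof to compare against; what you have written is an attempted sketch of Speyer's theorem itself.

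Your reduction is correct and is exactly Speyer's starting point: reducedness of $c^k$ is equivalent to $c^i(\gamma_j)\in\Phi^+$ for all $i\ge 0$, and your distinctness argument is fine. Where you diverge from Speyer is in the mechanism for proving positivity. Speyer does \emph{not} split into affine versus indefinite and does \emph{not} rely on the existence of a strictly positive dominant eigenfunctional. Instead he works in the contragredient representation $V^*$ with the closed fundamental chamber $D$, computes explicitly how $c$ acts in the basis of fundamental coweights (the off-diagonal entries are nonnegative because the off-diagonal Cartan entries are nonpositive), and from this sign pattern produces a point in $D^\circ$ whose entire backward $c$-orbit remains in $D$. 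This is uniform and sidesteps all eigenvalue issues.

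Your route has genuine gaps at exactly the places you flag as ``appeal to the literature'':
\begin{itemize}
\item In indefinite type, Perron--Frobenius gives a strictly positive left eigenfunctional for the \emph{bipartite} Coxeter element, but you need it for an \emph{arbitrary} Coxeter element. Conjugate Coxeter elements have the same spectrum but not the same eigenvectors, and ``expecting'' positivity to transfer is not a proof; when the Coxeter graph has cycles the reduction to the bipartite case is unavailable.
\item In affine type your treatment of the case $\lambda_0(\gamma_j)=0$ cannot be ``checked directly'': if the orbit of $\gamma_j$ were genuinely periodic of period $p$, then $c^{p-1}(\gamma_j)=c^{-1}(\gamma_j)\in\Phi^-$, which would \emph{falsify} the lemma rather than verify it. So you actually need $\lambda_0(\gamma_j)>0$ strictly for every $j$, and that is a real statement requiring proof. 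Likewise the ``finitely many small $k$'' are only finitely many \emph{per group}; you still need a uniform argument covering $0\le r<h$.
\end{itemize}
In short: the skeleton is right, but the spectral shortcut you propose is precisely what Speyer's Tits-cone argument is designed to avoid.
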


Our second result is the following theorem, originally due to Dyer \cite{Dyer_rootII}, see \S\ref{sec:relatedworks}.

\begin{thm} \label{prop:thmfordihedral}
Lusztig's partitions are Coxeter partitions. Conversely, a Coxeter partition where the elements of $U(s)$ all commute with each other (for each $s \in S$) is a Lusztig's partition.
\end{thm}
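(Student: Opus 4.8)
The plan is to prove the two implications of Theorem~\ref{prop:thmfordihedral} separately, with the first (Lusztig's partitions are Coxeter partitions) being the more substantive direction.

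\medskip

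\noindent\textbf{Lusztig's partitions are Coxeter partitions.}
First I would check Condition (1) of Coxeter partitions: if all elements of $\ufd(s)$ commute with each other, then $\hat{W}_{\ufd(s)}$ is a finite Coxeter group (it is a product of copies of $\ZZ/2\ZZ$), and its longest element is $\lng{s} = \prod_{\hat{s}\in\ufd(s)}\hat{s}$, which is exactly $\phi(s)$ from the Lusztig picture. For Condition (2), fix $s\ne t$ and decompose $\hat{W}_{\ufd(s)\sqcup\ufd(t)} = \prod_i \hat{W}_i$ into its irreducible components (as a Coxeter graph). The key observation is that each $\hat{W}_i$ inherits a partition of its generators into two commuting subsets $S_i' := \hat{S}_i \cap \ufd(s)$ and $S_i'' := \hat{S}_i \cap \ufd(t)$; write $x_i := \prod_{S_i'}$ and $y_i := \prod_{S_i''}$. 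Then $\lng{s} = \prod_i x_i$ and $\lng{t} = \prod_i y_i$, and since the factors for distinct $i$ commute, $\langle\lng{s},\lng{t}\rangle^k = \prod_i \langle x_i, y_i\rangle^k$ for all $k$, and the order $m_{st}$ of $\lng{s}\lng{t}$ is the least common multiple\ldots\ actually, one must be careful here: $\lng{s}\lng{t} = \prod_i x_i y_i = \prod_i c_i$ where $c_i$ is a Coxeter element of $\hat{W}_i$. If all $\hat{W}_i$ are infinite, each $c_i$ has infinite order (standard fact), so $m_{st}=\infty$, and by Lemma~\ref{lem:Speyer} each $c_i^k$ is reduced; combined with the commuting-factors observation and the fact that a concatenation of reduced expressions in commuting parabolic subgroups with disjoint supports is reduced, $\langle\lng{s},\lng{t}\rangle^k$ is reduced for all $k\ge 2$, giving 2(b). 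If all $\hat{W}_i$ are finite with common Coxeter number $h = m_{st}$, then Lemma~\ref{lem:Bourbaki} says $\langle y_i, x_i\rangle^h$ is a reduced expression for the longest element of $\hat{W}_i$; taking the product over $i$ and again using the commuting-disjoint-support principle shows $\langle\lng{s},\lng{t}\rangle^{m_{st}}$ is a reduced expression for the longest element of $\hat{W}_{\ufd(s)\sqcup\ufd(t)}$, giving 2(a). The one gap to close carefully is why $m_{st}$, the order of $\prod_i c_i$, equals the common Coxeter number rather than something larger: here I would use that by Lemma~\ref{lem:Bourbaki} each $c_i$ has order exactly $h$ (it is $c_i^{\lfloor h/2\rfloor}$ that is the longest reduced power, and $c_i^h = 1$), so the product of commuting elements each of order $h$ has order exactly $h$.

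\medskip

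\noindent\textbf{Converse.}
Now suppose $\pi$ is a Coxeter partition in which the elements of each $\ufd(s)$ pairwise commute; I must show it satisfies Condition (2) of Definition~\ref{defn:lusztig}, i.e.\ that $\hat{W}_{\ufd(s)\sqcup\ufd(t)}$ is a product of Coxeter systems all having the same Coxeter number $m_{st}$. As above, $\lng{s}$ and $\lng{t}$ are the respective products of commuting generators, and decomposing $\hat{W}_{\ufd(s)\sqcup\ufd(t)} = \prod_i \hat{W}_i$ into irreducibles, each $\hat{W}_i$ carries the two-commuting-subsets structure with Coxeter element $c_i$. From the Coxeter-partition axioms, $\langle\lng{s},\lng{t}\rangle^{m_{st}} = \prod_i \langle x_i, y_i\rangle^{m_{st}}$ is reduced (when $m_{st}<\infty$, as a reduced expression for the longest element; when $m_{st}=\infty$, reduced for every exponent). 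Projecting to the $i$-th factor, $\langle x_i,y_i\rangle^{k}$ is reduced for all $k \le m_{st}$ (resp.\ all $k$). By Lemma~\ref{lem:Bourbaki} applied to $\hat{W}_i$, if $\hat{W}_i$ is finite the largest reduced power of $c_i$ is $c_i^{\lfloor h_i/2\rfloor}$ where $h_i$ is its Coxeter number, which forces $h_i = m_{st}$; and by Lemma~\ref{lem:Speyer}, $\hat{W}_i$ infinite is consistent only with $m_{st}=\infty$. Either way all $\hat{W}_i$ share the Coxeter number $m_{st}$.

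\medskip

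\noindent\textbf{Main obstacle.}
I expect the main technical point to be the bookkeeping around splitting $\hat{W}_{\ufd(s)\sqcup\ufd(t)}$ into irreducible components and translating ``$\langle\lng{s},\lng{t}\rangle^k$ is reduced'' into componentwise statements about $\langle x_i,y_i\rangle^k$ and Coxeter elements $c_i$ --- in particular justifying that a product (over commuting, disjoint-support components) of reduced expressions is reduced, and that the length and the longest element factor through the decomposition. This is standard but needs to be stated cleanly. The secondary subtlety is the order-of-a-product-of-commuting-Coxeter-elements computation ensuring $m_{st}$ is exactly the common Coxeter number; once Lemma~\ref{lem:Bourbaki}'s consequence that a Coxeter element of an irreducible finite Coxeter group has order equal to its Coxeter number is invoked, this is immediate.
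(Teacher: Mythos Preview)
Your proposal is correct and follows essentially the same approach as the paper: reduce Condition~(2) to the rank-two situation, factor $\hat{W}_{\ufd(s)\sqcup\ufd(t)}$ into irreducible components, rewrite $\langle\lng{s},\lng{t}\rangle^k$ as a commuting product of componentwise alternating words, and then invoke Lemma~\ref{lem:Bourbaki} (finite case) and Lemma~\ref{lem:Speyer} (infinite case). The one place where your write-up is slightly too quick is the converse clause ``which forces $h_i = m_{st}$'': reducedness of $\langle x_i,y_i\rangle^{m_{st}}$ only gives $h_i \ge m_{st}$, and the paper closes the argument exactly as you did in the forward direction, by noting that $c_i^{m_{st}} = 1$ forces $h_i \mid m_{st}$, hence $h_i = m_{st}$.
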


\begin{proof} 

Given a surjection $\pi: \hat{S} \rightarrow S$ satisfying condition (1) of a Lusztig's partition, it is immediate that condition (1) of a Coxeter partition is satisfied. It suffices to prove that condition (2) of a Lusztig's partition is equivalent to condition (2) of a Coxeter partition, in this case. Condition (2) is a ``rank two condition,'' namely it is a condition on each pair of distinct vertices $s \neq t \in S$.
As such, it suffices to prove the case where $S = \{s,t\}$ is a two-element set.
This will be the case considered in the rest of this proof.

First suppose that $(\hat{W},\hat{S})$ corresponds to a {\bf connected} Coxeter graph. Then $\hat{S}$ has a unique bipartite coloring (up to swapping the colors). Condition (2) of a Lusztig's partition is now vacuous, so we must prove that condition (2) of a Coxeter partition holds. If $\hat{W}$ is finite with Coxeter number $m_{st}$, that $\langle \lng{s}, \lng{t} \rangle^{m_{st}}$ is a reduced expression for the longest element of $\hat{W}$ follows from Lemma \ref{lem:Bourbaki}.
If $\hat{W}$ is instead infinite, consider the Coxeter element $\hat{c}$ of $\hat{W}$ given by
\[
\hat{c} := \lng{s}\lng{t} = 
	\left( \prod_{\hat{s} \in \ufd(s)} \hat{s} \right)\left( \prod_{\hat{t} \in \ufd(t)} \hat{t} \right).
\]
By Lemma \ref{lem:Speyer}, any power of $\hat{c}$ is reduced\footnote{In fact, we only need a weaker version of Lemma \ref{lem:Speyer}, which was proven earlier in \cite[Corollary 9.6]{FZ_clustercoeff}.}. 
But for all $k \geq 2$, $\langle \lng{s},\lng{t} \rangle^{k}$ is always a subword of some power of $\hat{c}$, so $\langle \lng{s},\lng{t} \rangle^{k}$ must itself be reduced.

Now consider the general case: $\hat{S}$ is a disjoint union of connected Coxeter graphs $\hat{S}_i$. Then $\hat{W}$ is a product $\prod_i \hat{W}_i$. 
For each $i$, we use the shorthand
\[ 
\lngi{s} := \prod_{\hat{s} \in \ufd(s) \cap \hat{S}_i} \hat{s},
\]
so that $\lng{s}$ is the product of all $\lngi{s}$ (in any order).
For all $k \geq 2$, the following equality (in $\hat{W}$) can be obtained by only applying commutativity relations:
\begin{align*}
\langle \lng{s},\lng{t} \rangle^k
	&= \left\langle \prod_i \lngi{s}, \prod_i \lngi{t} \right\rangle^k \\
	&= \prod_i \langle \lngi{s}, \lngi{t} \rangle^k.
\end{align*}

If each $\hat{W}_i$ has the same Coxeter number $m_{st}$ (finite or infinite), then by the connected case above, $\langle \lngi{s}, \lngi{t} \rangle^k$ is reduced for $2 \le m \le m_{st}$. As a (direct) product of reduced expressions, $\langle \lng{s}, \lng{t} \rangle^k$ is also reduced, which is condition (2) of a Coxeter partition.

Conversely, let $m_{st}$ be the order of $\lng{s}\lng{t}$ (finite or infinite). If some $\hat{W_i}$ has Coxeter number $h_i$ strictly less than $m_{st}$, then for any $h_i < k \le m_{st}$ we have that $\langle \lngi{s}, \lngi{t} \rangle^k$ is not a reduced expression. Consequently, neither is $\langle \lng{s}, \lng{t} \rangle^k$. So if condition (2) of a Coxeter partition holds, then $h_i \ge m_{st}$ whenever $m_{st}$ is finite, or $h_i = \infty$ whenever $m_{st}$ is infinite. Since $h_i$ divides $m_{st}$ whenever both are finite (in order for $(\lng{s}\lng{t})^{m_{st}}$ to be the identity), we deduce that $h_i = m_{st}$. This proves condition (2) of a Lusztig's partition.
\end{proof}

Condition (2) of Lusztig's partitions is a ``same Coxeter number'' property for connected components of  subgraphs generated by $\ufd(s) \sqcup \ufd(t)$. We now show that the ``same Coxeter number'' property extends to connected components of the whole Coxeter graph\footnote{
By restricting the Coxeter partition from $\hat{S}$ to a suitable subset, we also obtain the result for $\ufd(s) \sqcup \ufd(t) \sqcup \ufd(u)$ and other preimages.
} 
of $(\hat{W}, \hat{S})$.
Note that this is not true for Coxeter partitions in general; see Example \ref{ex:diffcoxnum}.

\begin{prop} \label{prop:samecoxnum}
Let $\pi: \hat{S} \rightarrow S$ be a Lusztig's partition and let $(W,S)$ be an irreducible Coxeter system with Coxeter number $h$.
Then the irreducible components of $(\hat{W}, \hat{S})$ also have the same Coxeter number $h$.
\end{prop}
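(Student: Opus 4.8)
The plan is to first reduce to the case where the Coxeter graph of $(\hat W,\hat S)$ is connected, and then in that case to transport the ``dihedral'' reducedness statements of Lemmas \ref{lem:Bourbaki} and \ref{lem:Speyer} from $W$ to $\hat W$ along the Coxeter embedding. For the reduction, write $\hat S=\bigsqcup_i\hat S_i$ for the connected components of the Coxeter graph, so that $\hat W=\prod_i\hat W_{\hat S_i}$ and the irreducible components of $(\hat W,\hat S)$ are the $(\hat W_{\hat S_i},\hat S_i)$. First I would show that, because $(W,S)$ is irreducible, each restriction $\pi|_{\hat S_i}\co\hat S_i\to S$ is still surjective: if $\pi(\hat S_i)$ were a nonempty proper subset of $S$, connectedness of the Coxeter graph of $S$ would give $s\in\pi(\hat S_i)$ and $t\notin\pi(\hat S_i)$ with $m_{st}\ge 3$, and then for any $\hat s\in\ufd(s)\cap\hat S_i$ its connected component in the graph on $\ufd(s)\sqcup\ufd(t)$ is forced to lie inside $\ufd(s)\cap\hat S_i$ (since $\ufd(t)\cap\hat S_i=\mt$ and $\hat S_i$ is a union of connected components of $\hat S$), hence is a single vertex by condition \eqref{item:commute}, hence has Coxeter number $2$, contradicting that all components of $\hat W_{\ufd(s)\sqcup\ufd(t)}$ have Coxeter number $m_{st}\ge 3$. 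Next I would check that $\pi|_{\hat S_i}$ is itself a Lusztig's partition onto $S$ inducing the same $(W,S)$: condition \eqref{item:commute} is inherited, and for condition (2) one notes that the connected components of the graph on $(\ufd(s)\cap\hat S_i)\sqcup(\ufd(t)\cap\hat S_i)$ are precisely those components of the graph on $\ufd(s)\sqcup\ufd(t)$ lying in $\hat S_i$, so they share the common Coxeter number $m_{st}$, and there is at least one of them by surjectivity. Granting the connected case for each $\pi|_{\hat S_i}$, every $\hat W_{\hat S_i}$ then has Coxeter number $h$.

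Now assume $\hat S$ is connected, so $\hat W$ is irreducible with some Coxeter number $\hat h$; the goal is to prove $\hat h=h$. Since a Lusztig's partition is a Coxeter partition (Theorem \ref{prop:thmfordihedral}), Theorem \ref{strongermainthm} provides a Coxeter embedding $\phi\co W\into\hat W$ that sends reduced expressions to reduced expressions. I claim $\phi$ moreover \emph{reflects} reducedness: a word $t_1\cdots t_m$ in $S$ is reduced if and only if the word $\phi(t_1)\cdots\phi(t_m)$ is reduced in $\hat W$, where each block $\phi(t_j)=\lng{t_j}=\prod_{\hat s\in\ufd(t_j)}\hat s$ is read as a word in $\hat S$. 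The ``only if'' is Theorem \ref{strongermainthm}; for the ``if'', if $t_1\cdots t_m$ is not reduced then, letting $k$ be minimal with $t_1\cdots t_k$ not reduced, we have $t_k\in\RD(t_1\cdots t_{k-1})$, whence $\ufd(t_k)\subseteq\RD(\phi(t_1\cdots t_{k-1}))$ by the last part of Theorem \ref{strongermainthm}; since $\ufd(t_k)$ is a commuting set of right descents of the reduced element $C:=\phi(t_1\cdots t_{k-1})$, right-multiplying $C$ by $\phi(t_k)$ (the longest element of $\hat W_{\ufd(t_k)}\cong(\ZZ/2\ZZ)^{|\ufd(t_k)|}$) strictly decreases length, so the prefix $\phi(t_1)\cdots\phi(t_k)$ of $\phi(t_1)\cdots\phi(t_m)$ is already non-reduced.

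Next I would feed in the bipartite Coxeter element. If $W$ is infinite then $h=\infty$, and $\hat W$ must also be infinite (otherwise $\phi$ would embed $W$ in a finite group), so $\hat h=\infty=h$. If $W$ is finite its Coxeter graph is a tree; fix the bipartition $S=S'\sqcup S''$ and put $x=\prod_{s\in S'}s$, $y=\prod_{s\in S''}s$. Then $\hat S':=\bigsqcup_{s\in S'}\ufd(s)$ and $\hat S'':=\bigsqcup_{s\in S''}\ufd(s)$ are commuting subsets of $\hat S$ --- within a single $\ufd(s)$ by \eqref{item:commute}, and across $\ufd(s),\ufd(t)$ in the same part because $m_{st}=2$ forces the graph on $\ufd(s)\sqcup\ufd(t)$ to be edgeless --- so $\phi(x)=\prod_{\hat s\in\hat S'}\hat s$ and $\phi(y)=\prod_{\hat s\in\hat S''}\hat s$, and applying $\phi$ letterwise to the word $\langle x,y\rangle^k$ produces the word $\langle\phi(x),\phi(y)\rangle^k$. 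By Lemma \ref{lem:Bourbaki} applied in $W$, the largest $k$ for which $\langle x,y\rangle^k$ is reduced is exactly $h$: it is a subword of $w_0=\langle x,y\rangle^h$ when $k\le h$, while for $k>h$ the word has more than $\ell(w_0)$ letters yet represents an element of length at most $\ell(w_0)$ (as $w_0$ is the longest element of $W$). Since $\phi$ preserves and reflects reducedness, the largest $k$ for which $\langle\phi(x),\phi(y)\rangle^k$ is reduced in $\hat W$ is also $h$, in particular finite; but $\phi(x)\phi(y)=\prod_{\hat s\in\hat S}\hat s$ is a Coxeter element of $\hat W$, so were $\hat W$ infinite, Lemma \ref{lem:Speyer} would make all of its powers --- and hence all of the words $\langle\phi(x),\phi(y)\rangle^k$ --- reduced, a contradiction. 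Hence $\hat W$ is finite, its Coxeter graph is a tree, and $\hat S=\hat S'\sqcup\hat S''$ is its bipartition, so Lemma \ref{lem:Bourbaki} applied in $\hat W$ identifies that same largest $k$ with $\hat h$. Therefore $\hat h=h$.

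I expect the main obstacle to be the reduction step, specifically the verification that the restriction of a Lusztig's partition to a connected component of $\hat S$ is again a Lusztig's partition \emph{onto all of $S$} and \emph{inducing the original $(W,S)$} --- this is exactly the place where irreducibility of $(W,S)$ enters. The remainder is bookkeeping around Lemmas \ref{lem:Bourbaki} and \ref{lem:Speyer}, with the only genuinely new ingredient being that the Coxeter embedding reflects, and not merely preserves, reducedness.
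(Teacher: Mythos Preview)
Your argument is correct, but it takes a genuinely different route from the paper's. The paper does \emph{not} reduce to the connected case: it works with all irreducible components $(\hat W_i,\hat S_i)$ simultaneously, observing that for a bipartite Coxeter element $c=x.y\in W$ one has $\phi(c)=\prod_i \hat c_i$ with each $\hat c_i$ a Coxeter element of $\hat W_i$, and that reducedness of a word in $\hat W=\prod_i\hat W_i$ factors componentwise. From there the paper only uses the \emph{forward} direction of Theorem~\ref{strongermainthm} (reduced $\Rightarrow$ reduced): if $h=\infty$ then Speyer forces every $(\hat c_i)^k$ reduced, whence $h_i=\infty$; if $h<\infty$ then $c^{\lfloor h/2\rfloor}$ reduced forces each $(\hat c_i)^{\lfloor h/2\rfloor}$ reduced, so $h_i>\lfloor h/2\rfloor$, and together with $h_i\mid h$ (since $(\hat c_i)^h=\phi(c^h)=\id$) this yields $h_i=h$ by a one-line numerical argument. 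By contrast, you (i) first restrict $\pi$ to each connected component and verify this is again a Lusztig's partition onto all of $S$ with the same $m_{st}$'s, (ii) prove the stronger statement that $\phi$ \emph{reflects} reducedness (using the descent-set clause of Theorem~\ref{strongermainthm}), and (iii) match the quantity ``largest $k$ with $\langle x,y\rangle^k$ reduced'' on the two sides rather than using divisibility. Your route costs more setup but yields two pleasant byproducts: the restriction lemma (which the paper only alludes to in a footnote) and the reflection-of-reducedness statement. The paper's route is shorter and avoids both, at the price of the small asymmetry that the finite case is closed by $h_i\mid h$ and $h_i>\lfloor h/2\rfloor$ rather than by a direct identification of~$\hat h$.
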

\begin{proof}
Throughout, we will implicitly use the fact that a Lusztig's partition is a Coxeter partition, shown in Theorem \ref{prop:thmfordihedral}.

Since $\pi$ is a Lusztig's partition, the associated Coxeter embedding $\phi:W \rightarrow \hat{W}$ sends any Coxeter element $c\in W$ to a (mutually commuting) product $\prod_i \hat{c}_i$ of Coxeter elements $\hat{c}_i$ for each irreducible component $(\hat{W}_i, \hat{S}_i)$ of $(\hat{W}, \hat{S})$.
By Theorem \ref{strongermainthm}, $\phi$ sends reduced expressions to reduced expressions, hence if $c^k$ is reduced, then so is $\prod_i (\hat{c}_i)^k$.
Since the $\hat{c}_i$'s mutually commute,  the final statement is equivalent to each $(\hat{c}_i)^k$ being reduced.

Suppose $h = \infty$. By Lemma \ref{lem:Speyer}, $c^k$ is reduced for all $k > 0$, and thus so is $(\hat{c}_i)^k$. This shows that each irreducible component $(\hat{W}_i, \hat{S}_i)$ is infinite type, as desired.

Now let us assume $h < \infty$, and prove that $h_i = h$, where $h_i$ is the Coxeter number of the component $(\hat{W}_i, \hat{S}_i)$.
Take the Coxeter element $c = x.y \in W$ associated to some (two-coloring) partition of $S$ as in Lemma \ref{lem:Bourbaki}, so that $c^{\lfloor h/2 \rfloor}$ is a reduced expression.
By Theorem \ref{strongermainthm}, we get that
\[
\phi(c^{\lfloor h/2 \rfloor}) = \prod_i (\hat{c}_i)^{\lfloor h/2 \rfloor}
\]
is a reduced expression, and thus each $(\hat{c}_i)^{\lfloor h/2 \rfloor}$ is a reduced expression.
In particular, the order $h_i$ of $\hat{c}_i$ satisfies $h_i >\lfloor h/2 \rfloor$.
Since $(\hat{c}_i)^h = \id$, $h_i$ divides $h$.
A simple numerical argument shows that $h_i = h$ as required. 
\end{proof}

\section{Examples}\label{sec:examples}
Coxeter embeddings associated to Lusztig's partitions (when each $m_{st}$ is finite) were studied by Lusztig in \cite[\S 3]{LusSqint}. These include examples coming from folding by graph symmetries, such as the first example below. 
Lusztig's famous inclusion of Coxeter groups of type $H_4$ into type $E_8$ is a Lusztig's partition that does not come from graph symmetries. 
Our setting of Lusztig's partitions includes examples where $m_{st}$ is infinite (see Example \ref{ex:infinitelabel}). 

We present below some examples of Lusztig's partitions, followed by some examples of Coxeter partitions which are not Lusztig's partitions.
In all examples to follow, the elements in $\hat{S}$ are given by alphabets with subscripts and elements in $S$ are given by alphabets without subscripts.
The partition map $\pi: \hat{S} \rightarrow S$ is defined by forgetting the subscript.

\begin{ex} \label{ex:D4G2}
Consider the folding of $D_4$ onto $G_2$. 
\begin{center}
\begin{tikzpicture}[node distance={10ex}, main/.style = {draw, circle, scale=0.7}, label/.style = {midway, above, scale=0.7}] 
\node[main] (s1) {$s_1$}; 
\node[main] (s2) [below of = s1] {$s_2$};	
	\node[main] (t1) [right of = s2] {$t_1$};
\node[main] (s3) [below of = s2] {$s_3$};

\draw (s1) -- (t1);
\draw (s2) -- (t1);
\draw (s3) -- (t1);

\node[] (arrow) [right of = t1] {$\rightsquigarrow$};

\node[main] (s) [right of = arrow] {$s$};
\node[main] (t) [right of = s] {$t$};
\draw (s) -- node[label]{$6$} (t);
\end{tikzpicture}
\end{center}
This is an example of folding from a graph symmetry, because the map $\pi$ records the orbits under a group action on the Coxeter graph. 
\end{ex}

\begin{ex} \label{ex:otherG2}
Here are two different Lusztig's partitions, giving embeddings of $G_2$ which do not come from graph symmetries. 
\begin{center}
\begin{tikzpicture}[node distance={10ex}, main/.style = {draw, circle, scale=0.7}, label/.style = {midway, above, scale=0.7}] 
\node[main] (s1) {$s_1$}; 
\node[main] (s2) [below of = s1] {$s_2$};	
	\node[main] (t1) [right of = s2] {$t_1$};
\node[main] (s3) [below of = s2] {$s_3$};
	\node[main] (t2) [right of = s3] {$t_2$};

\draw (s1) -- (t1);
\draw (s2) -- (t1);
\draw (s2) -- (t2);
\draw (s3) -- (t2);

\node[] (arrow) [right of = t1] {$\rightsquigarrow$};

\node[main] (s) [right of = arrow] {$s$};
\node[main] (t) [right of = s] {$t$};
\draw (s) -- node[label]{$6$} (t);

\node[] (arrow2) [right of = t] {\reflectbox{$\rightsquigarrow$}};
 
\node[main] (s2') [right of = arrow2] {$s_2$}; 
\node[main] (t1') [right of = s2'] {$t_1$};
\node[main] (s1') [below of = s2'] {$s_1$};

\draw (s2') -- node[label]{$4$} (t1');
\draw (s1') -- (t1');

\end{tikzpicture}
\end{center}
\end{ex}

\begin{ex} \label{ex:infinitelabel}
Let $\hat{\Gamma}$ be a bipartite Coxeter graph, all of whose connected components have the same Coxeter number $h$ (possibly infinite). Any two coloring of $\hat{\Gamma}$ defines a Lusztig's partition onto the dihedral group $I_2(h)$. 
Below is an example for $h=\infty$.

\begin{center}
\begin{tikzpicture}[node distance={10ex}, main/.style = {draw, circle, scale=0.7}, label/.style = {midway, above, scale=0.7}] 
\node[main] (s1) {$s_1$}; 
\node[main] (s2) [below of = s1] {$s_2$};	
\node[main] (s3) [below of = s2] {$s_3$};

\node[main] (t1) [right of = s1] {$t_1$};
\node[main] (t2) [right of = s2] {$t_2$};
\node[main] (t3) [right of = s3] {$t_3$};

\node[main] (s4) [right of = t1] {$s_4$};
\node[main] (s5) [right of = t2] {$s_5$};
\node[main] (t4) [right of = s4] {$t_4$};
\node[main] (t5) [right of = s5] {$t_5$};

\node[main] (s6) [right of = t4] {$s_6$};
\node[main] (s7) [right of = t5] {$s_7$};
\node[main] (t6) [right of = s7] {$t_6$};

\draw (s1) -- node[fill,white]{} (t2); 
\draw (s2) -- (t1);
\draw (s3) -- node[fill,white]{} (t2);
\draw (s2) -- (t2);
\draw (s2) -- (t3);

\draw (s4) -- (t4); 
\draw (s4) -- node[fill,white]{} (t5);
\draw (s5) -- (t4);
\draw (s5) -- (t5);

\draw (s6) -- node[label,sloped]{$10$} (t6);
\draw (s7) -- node[label]{$24$} (t6);

\node[] (arrow) [right of = t6] {$\rightsquigarrow$};

\node[main] (s) [right of = arrow] {$s$};
\node[main] (t) [right of = s] {$t$};
\draw (s) -- node[label]{$\infty$} (t);
\end{tikzpicture}
\end{center}
\end{ex}

\begin{ex} 
The following is an example of a Lusztig's partition similar to the Lusztig's partition from $E_8$ onto $H_4$, except with type $B$ and type $D$ tails.
\begin{center}
\begin{tikzpicture}[node distance={12ex}, main/.style = {draw, circle, scale=0.7}, label/.style = {midway, above, scale=0.7}] 
\node[main] (s1) {$s_1$}; 
\node[main] (s2) [below of = s1] {$s_2$};

\node[main] (t1) [right of = s1] {$t_1$};	
\node[main] (t2) [right of = s2] {$t_2$};
	
\node[main] (u1) [right of = t1] {$u_1$};
\node[main] (u2) [right of = t2] {$u_2$};

\node[main] (v1) [right of = u1, yshift=4ex] {$v_1$};
\node[main] (v2) [right of = u1, yshift=-4ex] {$v_2$};
\node[main] (v3) [right of = u2] {$v_3$};

\draw (s1) -- (t1);
\draw (s2) -- (t1);
\draw (s2) -- (t2);

\draw (t1) -- (u1);
\draw (t2) -- (u2);

\draw (u1) -- (v1);
\draw (u1) -- (v2);
\draw (u2) -- node[label]{$4$} (v3);

\node[] (arrow) [right of = v2] {$\rightsquigarrow$};

\node[main] (s) [right of = arrow] {$s$};
\node[main] (t) [right of = s] {$t$};
\node[main] (u) [right of = t] {$u$};
\node[main] (v) [right of = u] {$v$};

\draw (s) -- node[label]{$5$} (t);
\draw (t) -- (u);
\draw (u) -- node[label]{$4$} (v);
\end{tikzpicture}
\end{center}
\end{ex}

\begin{ex} This example takes an embedding of $I_2(12)$ into $E_6 \times I_2(12)$, and adds some fun. 
\begin{center}
\begin{tikzpicture}[node distance={10ex}, main/.style = {draw, circle, scale=0.7}, label/.style = {midway, right, scale=0.7}] 
\node[main] (v1) {$v_1$};
\node[main] (u3) [below of = v1] {$u_3$};
\node[main] (u2) [left of  = u3] {$u_2$};
\node[main] (u4) [right of = u3] {$u_4$};
\node[main] (u1) [right of = u4] {$u_1$};
\node[main] (t3) [below of = u3] {$t_3$};
\node[main] (s3) [below of = t3] {$s_3$};
\node[main] (t2) [below of = u2] {$t_2$};
\node[main] (t4) [below of = u4] {$t_4$};
\node[main] (s2) [below of = t2] {$s_2$};
\node[main] (s4) [below of = t4] {$s_4$};
\node[main] (t1) [below of = u1] {$t_1$};	
\node[main] (s1) [below of = t1] {$s_1$};

\draw (s1) -- node[label]{$12$} (t1);
\draw (t1) -- (u1);
\draw (u1) -- (v1);
\draw (v1) -- (u2);
\draw (v1) -- (u3);
\draw (v1) -- (u4);
\draw (u2) -- (t2);
\draw (u3) -- (t3);
\draw (u4) -- (t4);

\draw (t2) -- (s2);
\draw (s2) -- (t3);
\draw (t3) -- (s4);
\draw (t3) -- (s3);
\draw (s4) -- (t4);

\node[] (arrow) [right of = u1, xshift=5ex] {$\rightsquigarrow$};

\node[main] (u) [right of = arrow, xshift=10ex] {$u$};
\node[main] (v) [above of = u] {$v$};
\node[main] (t) [below of = u] {$t$};
\node[main] (s) [below of = t] {$s$};

\draw (s) -- node[label]{$12$} (t);
\draw (t) -- (u);
\draw (u) -- node[label]{$\infty$} (v);
\end{tikzpicture}
\end{center}

\end{ex}

We conclude with some examples of Coxeter partitions that are not Lusztig's partitions.

\begin{ex}
Consider the following partition of $A_4$ onto $B_2$:
\begin{center}
\begin{tikzpicture}[node distance={10ex}, main/.style = {draw, circle, scale=0.7}, label/.style = {midway, above, scale=0.7}] 
\node[main] (s1) {$s_1$}; 
\node[main] (t1) [right of = s1] {$t_1$};	
\node[main] (t2) [below of = t1] {$t_2$};
\node[main] (s2) [below of = s1] {$s_2$};

\draw (s1) -- (t1);
\draw (t1) -- (t2);
\draw (t2) -- (s2);

\node[] (arrow) [right of = t1, yshift=-4ex] {$\rightsquigarrow$};

\node[main] (s) [right of = arrow] {$s$};
\node[main] (t) [right of = s] {$t$};
\draw (s) -- node[label]{$4$} (t);
\end{tikzpicture}
\end{center}
This is not a Lusztig's partition since $\ufd(t) = \{t_1,t_2\}$ and $t_1t_2 \neq t_2t_1$. (Note that $A_4$ and $B_2$ have different Coxeter numbers; cf.\ Proposition \ref{prop:samecoxnum}.)
Nonetheless, $\ufd(s)$ and $\ufd(t)$ both generate a finite parabolic subgroup, so condition (1) of a Coxeter partition holds.
The corresponding longest elements are $w_0^s = s_1s_2 = s_2s_1$ and $w_0^t = t_1t_2t_1 = t_2t_1t_2$ respectively.
A straightforward calculation shows that 
\[
\langle w_0^s, w_0^t \rangle^4 = s_2s_1t_2t_1t_2s_2s_1t_2t_1t_2 
\]
is a reduced expression for the longest element of $A_4$. The order of $w_0^sw_0^t$ is indeed $4$, since its square is the longest element and has order $2$.
This shows that condition (2) is satisfied.
Similarly, we have a Coxeter partition (that is not a Lusztig's partition) of affine type $D_5$ onto affine type $C_2$:
\begin{center}
\begin{tikzpicture}[node distance={10ex}, main/.style = {draw, circle, scale=0.7}, label/.style = {midway, above, scale=0.7}] 
\node[main] (s1) {$s_1$}; 
\node[main] (s2) [below of = s1] {$s_2$};
\node[main] (t1) [right of = s1] {$t_1$};	
\node[main] (t2) [below of = t1] {$t_2$};
\node[main] (u1) [right of = t1] {$u_1$};	
\node[main] (u2) [below of = u1] {$u_2$};

\draw (s1) -- (t1);
\draw (t1) -- (t2);
\draw (t2) -- (s2);
\draw (t1) -- (u1);
\draw (t2) -- (u2);

\node[] (arrow) [right of = u1, yshift=-4ex] {$\rightsquigarrow$};

\node[main] (s) [right of = arrow] {$s$};
\node[main] (t) [right of = s] {$t$};
\node[main] (u) [right of = t] {$u$};
\draw (s) -- node[label]{$4$} (t);
\draw (t) -- node[label]{$4$} (u);
\end{tikzpicture}
\end{center}
The reader is encouraged to construct the affine-type examples given in \cite[Table 2]{CrispInjective}.
\end{ex}

\begin{ex}\label{ex:diffcoxnum}
Note that Coxeter partitions (but not Lusztig's partitions; see Proposition \ref{prop:samecoxnum}) allow mixed Coxeter numbers, as the following example from $A_3 \sqcup A_4$ to $B_2$ shows:
\begin{center}
\begin{tikzpicture}[node distance={10ex}, main/.style = {draw, circle, scale=0.7}, label/.style = {midway, above, scale=0.7}] 
\node[main] (s1) {$s_1$}; 
\node[main] (t2) [right of = s1] {$t_1$};	
\node[main] (t3) [below of = t2] {$t_2$};
\node[main] (s4) [below of = s1] {$s_2$};

\node[main] (t1) [left of = s1, yshift=-5ex] {$t_3$};

\node[main] (s2) [left of = t1, yshift=5ex] {$s_3$};
\node[main] (s3) [below of = s2] {$s_4$};

\draw (s1) -- (t2);
\draw (t2) -- (t3);
\draw (t3) -- (s4);

\draw (s2) -- (t1);
\draw (s3) -- (t1);

\node[] (arrow) [right of = t2, yshift=-4ex] {$\rightsquigarrow$};

\node[main] (s) [right of = arrow] {$s$};
\node[main] (t) [right of = s] {$t$};
\draw (s) -- node[label]{$4$} (t);
\end{tikzpicture}
\end{center}
\end{ex}

\begin{rem}
Given a Coxeter system $(\hat{W},\hat{S})$ with $s_+,s_- \in \hat{S}$ such that $\hat{m}_{s_+ s_-} = 3$, an \emph{edge contraction} \cite[\S 1.3]{li_heckecontract} can be performed (on the Coxeter graph) to obtain a new Coxeter system $(W,S)$ defined by
\begin{align*}
S &:= \left(\hat{S} \cup \{s_0\} \right)\setminus \{s_+,s_-\}; \\
m_{st} &:= \begin{cases}
	\hat{m}_{st}, &\text{ if } s,t \neq s_0; \\
	\hat{m}_{ss_+} + \hat{m}_{ss_-} - 2, &\text{ if } t=s_0, \hat{m}_{s s_+} = 2 \text{ or } \hat{m}_{s s_-} = 2; \\
	\infty, &\text{ if } t=s_0, \hat{m}_{s s_+}, \hat{m}_{s s_-} > 2.
	\end{cases}
\end{align*}
The group homomorphism $\phi: W \rightarrow \hat{W}$ sending $s_0 \mapsto s_+s_-s_+$ with the identity on all other generators was shown to be injective in \cite[Proposition 1.4.1]{li_heckecontract}.
While $\phi$ feels similar to a ``Coxeter embedding'', we note that the natural surjection $\pi: \hat{S} \rightarrow S$ defined by $s_+, s_- \mapsto s_0$ and identity on all other generators is \emph{not} a Coxeter partition (condition (2) need not be satisfied).
In fact, even though $\phi$ is injective, it need not send reduced expressions to reduced expressions (while a Coxeter embedding always does); this can already be seen from an edge contraction on the $A_3$ Coxeter graph.
\end{rem}

\section{Related works}\label{sec:relatedworks}

We discovered after this paper was originally posted that none of our results (nor perhaps the proofs) are novel. We chose to keep our exposition and proofs intact above, and discuss this prior work here.

\subsection*{Admissible partitions; after M\"{u}hlherr}
Our definition of Coxeter partitions turns out to be equivalent to M\"{u}hlherr's definition of \emph{admissible partitions}; see \cite[pg.\ 278]{Muhlherr_CoxinCox} for the definition and \cite[Theorem 1.2 and Lemma  3.3]{Muhlherr_CoxinCox} for the fact that they are equivalent (see also \cite[Lemma 14]{Castella_admissiblesubmonoid}).

The injective part of our Theorem \ref{strongermainthm} is \cite[Theorem 1.1]{Muhlherr_CoxinCox}, whereas the preservation of reduced expressions is \cite[Proposition 4.5 A1]{Muhlherr_CoxinCox}. 
The final statement of Theorem \ref{strongermainthm} is in fact equivalent to Condition (A) of admissible partitions. 
Our proof of Theorem \ref{strongermainthm} above is different from (and shorter than) M\"{u}hlherr's proof, and we now give context on that proof.

\subsection*{LCM-homomorphisms and Artin (sub)monoids}
Oblivious to the earlier work by M\"{u}hlherr, our original intention was to use the results of LCM-homomorphisms (between Artin monoids) from Crisp's work \cite{CrispInjective} to obtain a weaker version of Theorem \ref{strongermainthm}; cf.\ \cite[Proposition 2.3]{CrispInjective}.
In the process, we also realized that the definition of LCM-homomorphisms and the proof of \cite[Lemma 2.2]{CrispInjective} (or similarly, \cite[Proposition 2.6]{godelle_injectif}) could be modified to obtain a direct proof of Theorem \ref{strongermainthm} in the Coxeter group setting, skipping the theory of Artin monoids.

The modification\footnote{Condition 2(a) of Coxeter partitions is essentially the definition of LCM-homomorphisms in the free-of-infinity setting considered by Crisp in \cite{CrispInjective}, and Condition 2(b) was what we added to include the free-of-infinity setting. On the other hand, Godelle's condition \textbf{(L3)} in his version of LCM-homomorphisms \cite[Definition 2.1]{godelle_injectif} for the general setting is strictly stronger than condition 2(b).} required led us to the definition of Coxeter partitions, which we later found out is equivalent to M\"{u}hlherr's definition of admissible partitions.
We later found out that the generalization of both the works \cite{CrispInjective, godelle_injectif} to the setting of admissible partitions (for Artin monoids) was studied by Castella in \cite{Castella_admissiblesubmonoid}.
Indeed, in Remark 35 of loc.\ cit.\ Castella suggests that the proofs of Crisp and Godelle for LCM-homomorphisms would also work in the more general setting of admissible partitions. Unwittingly, our proof of Theorem \ref{strongermainthm} is the realization of this remark, having stripped away the theory of Artin monoids.

\subsection*{Dyer's work}
Meanwhile, Dyer in \cite[Section 8]{Dyer_rootII} proves a generalization of Thoerem \ref{strongermainthm} (citing M\"{u}hlherr), and also proves Thoerem \ref{prop:thmfordihedral} in this generalized context. His proof of Theorem \ref{prop:thmfordihedral} is extremely similar to ours, including the use of Speyer's result (Lemma \ref{lem:Speyer}). Dyer too seems to have been unaware of prior work of Crisp or Castella or Godelle.

\subsection*{Acknowledgements} 
The authors would like to thank Federica Gavazzi for pointing out the work of Godelle, which led us down the rabbit hole to the work of Castella then M\"{u}hlherr and finally Dyer.
We also thank Yiqiang Li for informing us of his interesting work.
B.E. was supported by NSF grant DMS-2201387, and his research group was supported by DMS-2039316.

\printbibliography

\end{document}